\begin{document}
\newtheorem{theorem}{Theorem}[section]
\newtheorem{lemma}[theorem]{Lemma}
\newtheorem{corollary}[theorem]{Corollary}
\newtheorem{prop}[theorem]{Proposition}
\newtheorem{definition}[theorem]{Definition}
\newtheorem{remark}[theorem]{Remark}

 \def\ad#1{\begin{aligned}#1\end{aligned}}  \def\b#1{{\bf #1}} \def\hb#1{\hat{\bf #1}}
\def\a#1{\begin{align*}#1\end{align*}} \def\an#1{\begin{align}#1\end{align}}
\def\e#1{\begin{equation}#1\end{equation}} \def\t#1{\hbox{\rm{#1}}}
\def\dt#1{\left|\begin{matrix}#1\end{matrix}\right|}
\def\p#1{\begin{pmatrix}#1\end{pmatrix}} \def\c{\operatorname{curl}}
 \def\vc{\nabla\times } \numberwithin{equation}{section}
 \def\la{\circle*{0.25}}
\def\boxit#1{\vbox{\hrule height1pt \hbox{\vrule width1pt\kern1pt
     #1\kern1pt\vrule width1pt}\hrule height1pt }}
 \def\lab#1{\boxit{\small #1}\label{#1}}
  \def\mref#1{\boxit{\small #1}\ref{#1}}
 \def\meqref#1{\boxit{\small #1}\eqref{#1}}
\long\def\comment#1{}

\def\lab#1{\label{#1}} \def\mref#1{\ref{#1}} \def\meqref#1{\eqref{#1}}

\def\bg#1{{\pmb #1}} 

\title  [$C^1$-$P_k$ finite element]
   {Two families of C1-Pk Fraeijs de Veubeke-Sander finite elements on quadrilateral meshes}

\author { Shangyou Zhang }
\address{Department of Mathematical  Sciences, University of Delaware, Newark, DE 19716, USA.}
\email{ szhang@udel.edu }

\date{}

\begin{abstract}
We extend the $C^1$-$P_3$ Fraeijs de Veubeke-Sander finite element to
 two families of $C^1$-$P_k$ ($k>3$) macro finite elements on
  general quadrilateral meshes.
On each quadrilateral, four $P_k$ polynomials are defined on
  the four triangles subdivided from the quadrilateral by 
  its two diagonal lines.
The first family of $C^1$-$P_k$ finite elements is the full $C^1$-$P_k$
  space on the macro-mesh. 
Thus the element can be applied to interface problems.
The second family of $C^1$-$P_k$ finite elements condenses  
  all internal degrees of freedom by moving them to the four edges.
Thus the second element method has much less unknowns but is
  more accurate than the first one.
We prove the uni-solvency and the optimal order convergence.
Numerical tests and comparisons with the $C^1$-$P_k$ Argyris are provided.
\end{abstract}

\vskip .3cm

\keywords{  biharmonic equation; conforming element; macro element,
    finite element; quadrilateral mesh. }

\subjclass[2010]{ 65N15, 65N30 }

\maketitle

\section{Introduction} 
The finite elements were initially constructed for solving
     second order partial differential equations.
But the finite element methods became popular mainly because engineers 
   in early days started the constructions for the 4th order differential
  equation, the plate bending equation.  We can list some of these early constructions,  
 the $C^1$-$P_3$ Hsieh-Clough-Tocher element (1961,1965) \cite{Ciarlet,Clough}, 
  the $C^1$-$P_3$ Fraeijs de Veubeke-Sander element (1964,1965) \cite{Fraeijs,Fraeijs68,Sander}  
 the $C^1$-$P_5$  Argyris element (1968) \cite{Argyris},
  the $C^1$-$P_4$ Bell element (1969) \cite{Bell},
   the $C^1$-$Q_3$ Bogner-Fox-Schmit element (1965) \cite{Bogner},  and
 the $P_2$ nonconforming Morley  element (1969) \cite{morley}.
 
The $C^1$-$P_3$ Hsieh-Clough-Tocher element was extended to the $C^1$-$P_k$ ($k\ge 3$) 
   finite elements in \cite{Douglas,ZhangMG}.
The $C^1$-$P_5$  Argyris element was extended to the family of 
   $C^1$-$P_k$ ($k\ge 5$) finite elements in \cite{Zen70,Zlamal}. 
The $C^1$-$P_5$  Argyris element was modified and extended to the family of  
   $C^1$-$P_k$ ($k\ge 5$) full-space finite elements in \cite{Morgan-Scott}.
The $C^1$-$P_5$  Argyris element was also extended to 3D $C^1$-$P_k$ ($k\ge 9$)
   elements on tetrahedral meshes in \cite{Zenisek,Z3d,Z4d}.
The $C^1$-$P_4$ Bell element was extended to three families of 
    $C^1$-$P_{2m+1}$ ($m\ge 3$) finite elements in \cite{Xu-Zhang7,Xu-Zhang}. 
The Bell finite elements do not have any degrees of freedom on edges. 
Thus they must be odd-degree polynomials 
  (the $P_4$ Bell element is a subspace of $P_5$ polynomials.)
The $C^1$-$Q_3$ Bogner-Fox-Schmit element was extended to three families of $C^1$-$Q_k$ ($k\ge 3$)
  finite elements on rectangular meshes in \cite{Zhang-C1Q}.

In this work, we extend the $C^1$-$P_3$ Fraeijs de Veubeke-Sander element to two families
  of $C^1$-$P_k$ ($k\ge 3$) finite elements.
This is not done before.  But the $C^1$-$P_3$ Fraeijs de Veubeke-Sander element 
  was extended to $C^r$-$P_{d_r}$ ($r\ge 1$) finite elements in several publications
   \cite{Lag,Lag2,Lai,Moitinho}.

%

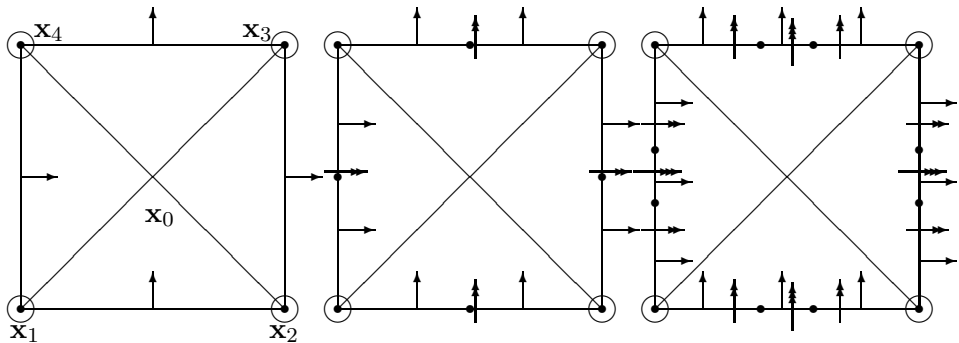
\begin{figure}[H]\centering
 \begin{picture}(360,120)(-10,-10) 
 \def\c{\circle*{3}}\def\h{\vector(1,0){14}}\def\v{\vector(0,1){14}}
 \def\mq{\begin{picture}(100,108)(0,0)  
  \put(0,0){\line(1,0){100}}  \put(0,0){\line(0,1){100}}  \put(100,0){\line(-1,1){100}} 
  \put(100,100){\line(-1,0){100}}  \put(100,100){\line(0,-1){100}}  \put(100,100){\line(-1,-1){100}} 
   \put(100,0){\circle*{3}} \put(100,0){\circle{10}}\put(100,100){\circle*{3}} \put(100,100){\circle{10}}  
    \put(0,0){\circle{10}} \put(0,0){\circle*{3}} \put(0,100){\circle*{3}} \put(0,100){\circle{10}}  
   \end{picture} }

 \put(0,0){\begin{picture}(100,108)(0,0)  \put(0,0){\mq} 
    \put(100,50){\h}   \put(50,100){\v}  \put(0,50){\h}     \put(50,0){\v}  
   \put(-4,-11){$\b x_1$} \put(5, 103){$\b x_4$}  \put(94,-11){$\b x_2$}  \put(84,103){$\b x_3$} 
    \put(47,33){$\b x_0$} 
 \end{picture} }

 \put(120,0){\begin{picture}(100,108)(0,0)  \put(0,0){\mq} 
    \put(100,50){\c}   \put(50,100){\c}   \put(0,50){\c}   \put(50,0){\c}
    \multiput(100,30)(0,40){2}{\h}  \multiput(0,30)(0,40){2}{\h} \multiput(30,100)(40,0){2}{\v} \multiput(30,0)(40,0){2}{\v} 
    \multiput(95,52)(2,0){2}{\h}   \multiput(-5,52)(2,0){2}{\h} 
    \multiput(52,-5)(0,2){2}{\v}  \multiput(52,95)(0,2){2}{\v}
 \end{picture} }

 \put(240,0){\begin{picture}(100,108)(0,0)  \put(0,0){\mq} 
    \multiput(100,40)(0,20){2}{\c}  \multiput(40,100)(20,00){2}{\c}   \multiput(0,40)(0,20){2}{\c}   \multiput(40,0)(20,0){2}{\c}
    \multiput(100,18)(0,30){3}{\h}  \multiput(18,100)(30,00){3}{\v}   \multiput(0,18)(0,30){3}{\h}   \multiput(18,0)(30,0){3}{\v} 
    \multiput(92,52)(2,0){3}{\h}   \multiput(-8,52)(2,0){3}{\h}  \multiput(52,-8)(0,2){3}{\v}    \multiput(52,92)(0,2){3}{\v}  
    \multiput(95,30)(2,0){2}{\h}  \multiput(95,70)(2,0){2}{\h}  \multiput(-5,30)(2,0){2}{\h}  \multiput(-5,70)(2,0){2}{\h} 
  \multiput(30,-5)(0,2){2}{\v}  \multiput(70,-5)(0,2){2}{\v}  \multiput(30,95)(0,2){2}{\v}  \multiput(70,95)(0,2){2}{\v} 
 \end{picture} }

 \end{picture}
 \caption{The 16, 28 and 44 degrees of freedom 
     of the $C^1$-$P_3$, $P_4$ and $P_5$ Fraeijs deVeubeke and Sander 
     macro-elements.} \label{f-dof}
 \end{figure}

The $C^1$-$P_3$ Fraeijs de Veubeke-Sander finite element is a
  macro-triangle element on general quadrilateral meshes,
    with 16 degrees of freedom per element, cf. \cite{Fraeijs,Sander}. 
On each quadrilateral, four $P_k$ polynomials are defined on
  the four triangles subdivided from the quadrilateral by 
  its two diagonal lines, see the left diagram in Figure \ref{f-dof}.
When proving the uni-solvency of Fraeijs de Veubeke-Sander finite element,
  three pieces of $P_3$ polynomials are employed in 
  the definition, cf. \cite{Ciarlet,Ciavaldini}. 
But it is not possible to construct $C^1$-$P_k$ ($k>3$)
  finite elements by three pieces of $P_k$ polynomials on one quadrilateral.
We uses four pieces of $P_k$ ($k\ge 3$) polynomials to construct the
  $C^1$ finite element, cf. Figure \ref{f-dof} for a display of degrees of freedom.
The first family of newly-constructed $C^1$-$P_k$ finite elements  is the full $C^1$-$P_k$
  space on the macro-mesh. 
Thus the element can be applied to interface problems.
The second family of $C^1$-$P_k$ finite elements condenses  
  all internal degrees of freedom by moving them to the four edges.
Thus the second element method has much less unknowns but is
  more accurate than the first one.
We prove the uni-solvency and the optimal order convergence.

Numerical tests on the two families of new $C^1$-$P_k$ elements
   and comparisons with the $C^1$-$P_k$ Argyris are provided,
   for solving the biharmonic equation and jump-coefficient biharmonic equations. 
The second family elements are more accurate with much less unknowns,
  comparing to the Argyris elements and the family elements, when solving the
   biharmonic equation.
But with coefficient jumps, the second family elements and the Argyris elements 
  deteriorate their convergence order to $0.5$ while
  the first family elements retain the optimal order $O(h^{k+1})$ of convergence.

\section{The  $C^1$-$P_k$   Fraeijs deVeubeke and Sanderfinite elements} 

We study the $C^1$-$P_k$ finite element space on a quadrilateral subdivided into four triangles.
  
We cut a general quadrilateral $\b x_1\b x_2\b x_3\b x_4$ \
   by its two diagonals into four triangles $T_1=\b x_1\b x_2\b x_0$, \
   $T_2=\b x_2\b x_3\b x_0$, \ $T_3=\b x_3\b x_4\b x_0$ and $T_4=\b x_4\b x_1\b x_0$,\
    cf. Figure \ref{f-dof}.
We denote the restriction of a function on the four triangles as 
\a{ p_i = p|_{T_i}, \quad i=1,2,3,4. }
We define the degrees of freedom of the  $C^1$-$P_k$
 \   Fraeijs deVeubeke and Sanderfinite elements, $k\ge 3$, \  by,
  for $i=1,2,3,4$,
\an{\label{dof} F_m(p) &=\begin{cases} p_i(\b x_i), \partial_x p_i(\b x_i), \partial_y p_i(\b x_i),  & \\
    \partial_{\b n} p_i(\frac { j\b x_i+(k-1-j)\b x_{i'}}{k-1}), & j=1,\dots, k-2, \\ 
     p_i(\frac j{k-2}\b x_i+\frac {k-2-j}{k-2}\b x_{i'}), & j=1,\dots, k-3, \\
    \partial_{\b n^2} p_i(\frac j{k-2}\b x_i+\frac {k-2-j}{k-2}\b x_{i'}), & j=1,\dots, k-3, \\ 
    \partial_{\b n^l} p_i(\frac j{k-l}\b x_i+\frac {k-l-j}{k-l}\b x_{i'}), & j=1,\dots, k-1-l,\\ & 
       \quad    l=3,\dots,k-2, 
    \end{cases} }
where $i'=i+1$, $\b x_5=\b x_1$, and $\b n$ is one fixed normal vector on the edge.  
    We note that the 4th line is  the special case of the 5th line when $l=2$.  
When $k=4$, the 5th line disappears.
If $k=3$, both the 4th and 5th lines disappear.
The total number of degrees of freedom in \eqref{dof} is
\an{ \label{t-dof}  \ad{ N&= 4 ( 3 + (k-2) + (k-3)+\sum_{l=2}^{k-2} (k-1-l) ) \\
                   &= 4( 2k-2 + \frac{(k-3)(k-2)}2 )=2k^2-2k+4.     }  }
Thus, when $k=3,4,5$, $N=16, 28, 44$, respectively,  as depicted in Figure \ref{f-dof}.

In order to get a $C^1$ element on the quadrilateral,  we post the following continuity constraints,
\an{\label{c} \begin{cases}  (p_i-p_{i'})(\b x_i)=0, & i=1,2,3,4, \\
           \partial_x (p_i-p_{i'})(\b x_i)=0,  & i=1,2,3,4, \\
           \partial_y (p_i-p_{i'})(\b x_i)=0,  & i=1,2,3,4, \\  
    (p_i-p_{i'})(\frac { j\b x_i+(k-2-j)\b x_{i'}}{k-2})=0, & i=1,2,3,4,  \\ &  j=1,\dots, k-3, \\  
    \partial_{\b x_0\b x_i^\perp} (p_i-p_{i'})(\frac { j\b x_i+(k-1-j)\b x_{i'}}{k-1})=0, & i=1,2,3,4, \\ & j=1,\dots, j_i, \\  
           p_1(\b x_0)=p_{i}(\b x_0), & i=2,3,4, \\
           \partial_{\b x_2\b x_4^\perp} (p_1- p_{i})(\b x_0)=0,  & i=2,3,4, \\
           \partial_{\b x_1\b x_3^\perp} ( p_1 - p_{i})(\b x_0)=0,  & i=2,3,4, \\ 
 \end{cases} }  where $i'=i+1$, $p_5=p_1$,  
   $\partial_{\b x_0\b x_i^\perp}$ is the directional derivative normal to edge $\b x_0\b x_i$, and
\an{\label{omit}
     j_1=j_2=j_4=k-2, \ j_3=k-3. }

We note that by \eqref{omit} we skip a constraint 
   $\partial_{\b x_0\b x_i^\perp} (p_i-p_{i'})(\frac { j\b x_i+(k-1-j)\b x_{i'}}{k-1})=0$ in \eqref{c}.
Thus, \eqref{c} does not ensure $p\in C^1( \b x_1\b x_2\b x_3\b x_4)$.
We will show that the missing constraint is automatically satisfied.

There are  \a{ N_e &= 4\times 3+4(2k-5-1 + 3\times 3=8k }
equations in \eqref{c}.  Combined with the dofs in \eqref{dof}, by \eqref{t-dof},  we get
\an{\label{N-e} N+N_e &=2k^2+6k+4=4\frac{(k+1)(k+2)}2 = 4\dim P_k. }
Thus the number of equations in \eqref{dof} and \eqref{c} is equal to the number of unknown coefficients.
We will prove in next section that there is a unique solution for each nodal basis function.

Let $\mathcal Q_h$ be a quasiuniform quadrilateral mesh of size $h$ on the polygonal domain $\Omega$.
On one quadrilateral $Q$,  we denote the basis functions by
\an{ \label{b-e}  \begin{cases} \phi_{i,Q}(\b x),  & i=1,\dots, 8k-8, \\
              \psi_{j,Q}(\b x), & j=1,\dots, (k-3)(k-2)/2, \end{cases} }
where $\{\phi_i\}$ are corresponding to the degrees of freedom of function values and first derivatives in the first three rows of
  \eqref{c}, and $\{\psi_j\}$ are   corresponding to the degrees of freedom of second and higher derivatives in the
  last two rows of \eqref{c}.  

Next, we define two types of global $C^1$-$P_k$  Fraeijs deVeubeke and Sanderfinite finite element spaces.
The first one is the full $C^1$-$P_k$  space on the macro-mesh:
\a{ V_h^{(1)} = \Big\{ \sum_{i=1}^{N_l} c_i \phi_i + \sum_{Q\in \mathcal Q_h} \sum_{j=1}^{(k-3)(k-2)/2} c_j \phi_{j,Q}
   \Big\},  }
where $\phi_i$ is a global basis function whose restriction on the neighboring quadrilateral is $\phi_{i,Q}$,
  and $N_l$ is the number of such non-local global basis functions. 
Restricting the boundary values and the boundary first derivatives to zero,  we define
\an{\label{V1} V_{h,0}^{(1)} = V_h^{(1)} \cap H^2_0(\Omega).  }

In this family of finite elements, we can replace the $(k-3)(k-2)/2$ high-order normal-derivative
  basis functions, $\phi_{j,Q}$ in \eqref{b-e}, at the four edges by the basis functions with $P_{k-4}$ internal Lagrange
   nodal degrees of freedom inside the triangle at the edge, i.e., 
    changing the dofs in the last two lines of \eqref{dof}.

The second finite element
   condenses the second and higher derivatives degrees of freedom
   so that the same accuracy is reached with much less unknowns globally.
In fact, with much less numbers of unknowns, the second method produces 
   much more accurate solutions than
  the first one, cf. the numerical tests at the end. 
\a{ V_h^{(2)} = \Big\{ \sum_{i=1}^{N_l} c_i \phi_i +  \sum_{i=1}^{N_h} c_j \phi_j 
   \Big\},  }
where $\phi_j$ is a global basis function whose restriction on the two neighboring quadrilateral is $\phi_{j,Q}$,
  and $N_h$ is the number of $2+$-derivative basis functions globally.
As some high derivatives are continuous across edges, thus,  $ V_h^{(2)}  \subsetneq  C^1$-$P_k$.
Restricting the boundary values and the boundary first derivatives to zero,  we define
\an{\label{V2}  V_{h,0}^{(2)} = V_h^{(2)} \cap H^2_0(\Omega).  }

We solve a model biharmonic equation on a bounded polygonal domain $\Omega$ in 2D:
Find $u$ such that
\an{\label{bi} \ad{ \Delta^2 u &= f \qquad \t{in } \ \Omega, \\
                           u=\partial_{\b n} u &=0  \qquad \t{on } \ \partial \Omega.  } }
            The finite element discretization on mesh $\mathcal Q_h$ is:
Find $u_h\in V_{h,0}^{(l)}$, $l=1$ or 2, such that
\an{\label{finite} ( \Delta  u, \Delta v) &= (f,v) \qquad \forall v\in V_{h,0}^{(l)},}
where $V_{h,0}^{(1)}$ and $V_{h,0}^{(2)}$ are defined in \eqref{V1} and \eqref{V2}, respectively.

\section{Uni-solvency and convergence}

We first present a non-trivial lemma, which will be used repeatedly
  for $P_3$, $P_4$, and $P_k$ polynomials.
 
\begin{lemma} 
Let a triangle $\b x_1\b x_2\b x_4$ be subdivided into 
  two triangles $T_1=\b x_1\b x_2\b x_0$ and $T_4=\b x_1\b x_0\b x_4$, cf. Figure \ref{1-e0}.
Let the $C^1$-$P_k$ function $p$ satisfy, for any $0<m\le k-2$,
\an{\label{1-k} \begin{cases} p_1=p|_{T_1}, \ \partial_{(\b x_1\b x_2^\perp)^i} p_1|_{\b x_1\b x_2}=0, \quad \ i =0,1,\dots,m, \\
          p_4=p|_{T_4}, \ \partial_{(\b x_1\b x_4^\perp)^i} p_4|_{\b x_1\b x_4}=0, \quad \  i =0,1,\dots,m,\\
          \partial_{(\b x_1\b x_0^\perp)^i} p_1|_{\b x_1\b x_0}
       =\partial_{(\b x_1\b x_0^\perp)^i} p_4|_{\b x_1\b x_0}, \ i=0,1,  \end{cases}
  } where $ \partial_{(\b x_1\b x_2^\perp)^i} p_1$ is the $i$-th normal derivative to the edge. 
  Then, the $m+1$ tangential derivative at $\b x_1$ vanishes, i.e.,
  \an{\label{1-k2}  \partial_{(\b x_1\b x_0)^{m+1}} p_1(\b x_0) = 0. }
\end{lemma}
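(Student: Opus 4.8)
The plan is to work in affine coordinates chosen so that $\b x_1=(0,0)$, $\b x_2=(1,0)$, $\b x_4=(0,1)$; since $\b x_0$ is an interior point of the segment $\b x_2\b x_4$ it then has the form $\b x_0=(1-s,s)$ with $s\in(0,1)$, and the edges $\b x_1\b x_2$ and $\b x_1\b x_4$ lie on the axes $\{y=0\}$ and $\{x=0\}$. This loses no generality, because the conclusion (equivalently, the vanishing of \emph{all} order-$(m+1)$ partial derivatives of $p_1$ at $\b x_1$) is invariant under the affine change, which is linear on each $D^{m+1}$. First I would convert the hypotheses \eqref{1-k} into divisibility relations: a polynomial all of whose normal derivatives of orders $0,1,\dots,m$ vanish identically along a line is divisible by the $(m+1)$st power of the linear form defining that line, so $p_1=y^{m+1}q_1$ and $p_4=x^{m+1}q_4$ with $q_1,q_4\in P_{k-m-1}$; and the $C^1$ continuity of $p$ across the interface line $\b x_1\b x_0=\{\ell=0\}$, $\ell:=sx-(1-s)y$ (the third group of conditions in \eqref{1-k}), forces $\ell^2\mid(p_1-p_4)$, say $p_1-p_4=\ell^2 r$ with $r\in P_{k-2}$.

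The heart of the argument is then a comparison of the lowest-order homogeneous parts at $\b x_1=0$. Neither $y^{m+1}q_1$ nor $x^{m+1}q_4$ has a term of degree below $m+1$, so the degree-$(m+1)$ homogeneous part of $p_1-p_4$ is the binary form $q_1(\b x_1)\,y^{m+1}-q_4(\b x_1)\,x^{m+1}$; on the other side, $\ell$ being homogeneous of degree one, the degree-$(m+1)$ part of $\ell^2 r$ is $\ell^2$ times the degree-$(m-1)$ part of $r$ (just the constant $r(\b x_1)$ when $m=1$). Equating the two shows that $\ell^2$ divides $q_1(\b x_1)y^{m+1}-q_4(\b x_1)x^{m+1}$. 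I would finish by ruling this out unless both coefficients vanish: since $0<s<1$, $\ell$ is proportional neither to $x$ nor to $y$, whereas $q_1(\b x_1)y^{m+1}-q_4(\b x_1)x^{m+1}$, when nonzero, is either a nonzero multiple of $x^{m+1}$ or of $y^{m+1}$ — whose only repeated linear factor is $x$, resp. $y$ — or else splits into $m+1$ distinct linear factors over $\mathbb C$ and is square-free; in every case it is not divisible by $\ell^2$. Hence $q_1(\b x_1)=q_4(\b x_1)=0$.

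From $p_1=y^{m+1}q_1$ with $q_1(\b x_1)=0$ it follows that $p_1$ has no homogeneous term of degree $\le m+1$ at $\b x_1$, so every partial derivative of $p_1$ of order $m+1$ — in particular $\partial_{(\b x_1\b x_0)^{m+1}}p_1$ — vanishes at $\b x_1$, which is \eqref{1-k2}. I expect the only delicate point to be the divisibility step: one must use that $\b x_0$ is a genuine subdivision point, i.e. $s\in(0,1)$ strictly, so that $\ell$ is not a coordinate multiple and a square-free (or purely $x$-power / $y$-power) binary form cannot be a multiple of $\ell^2$; the rest is routine bookkeeping with homogeneous components and the Leibniz rule. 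The same scheme, applied at the remaining vertices and with the roles of $p_1$ and $p_4$ interchanged, supplies all the variants of the statement used afterwards for $P_3$, $P_4$ and $P_k$.
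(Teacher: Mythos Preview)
Your argument is correct and follows the same overall strategy as the paper: pick coordinates at $\b x_1$, factor each piece as the $(m{+}1)$st power of the linear form of its outer edge times a cofactor, impose the $C^1$ match across the interface $\b x_1\b x_0$, and deduce that the cofactor's constant term vanishes, which kills all order-$(m{+}1)$ derivatives of $p_1$ at $\b x_1$.

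The packaging differs in two cosmetic but noteworthy ways. The paper uses a rigid motion placing the \emph{interface} $\b x_1\b x_0$ on a coordinate axis, so the $C^1$ condition becomes an explicit coefficient match on $\{y=0\}$, and the lowest-order coefficients $(c_{00},d_{00})$ are killed by a $2\times 2$ linear system with determinant $(-1)^{m+1}a_1^{m}a_2^{m}(a_1+a_2)\ne 0$. You instead use an affine change placing the two \emph{outer} edges on the axes, recast $C^1$ continuity as the divisibility $\ell^2\mid(p_1-p_4)$, and replace the determinant computation by the observation that the binary form $q_1(\b x_1)\,y^{m+1}-q_4(\b x_1)\,x^{m+1}$ is square-free (or a pure coordinate power) and hence cannot be divisible by $\ell^2$ when $\ell$ is transverse to both axes. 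Your route is a bit more conceptual and avoids any coefficient bookkeeping; the paper's is more hands-on but makes the nondegeneracy condition ($a_1,a_2>0$, equivalently your $0<s<1$) completely explicit. Either way the essential content is the same $2\times 2$ constraint, and your affine-invariance remark correctly justifies why the normal-derivative hypotheses survive the non-orthogonal coordinate change.
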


\begin{proof} As the function value and all directional derivatives are invariant under shifting and rotating,
   we assume the triangle is located at the origin, shown as in Figure \ref{1-e0}.

\begin{figure}[H]\centering \setlength\unitlength{1.3pt} 
 \begin{picture}(150,100)(-20,0) 
 
 \put(0,0){\begin{picture}(100,108)(0,0) 

 \put(0,30){\line(1,0){50}}  \put(0,30){\line(1,2){30}}  \put(0,30){\line(2,-1){60}} 
 \put(60,0){\line(-1,3){30}}  
   \put(50,33){$\b x_0$}    \put(-22, 20){$(0,0)$}
   \put(-17, 30){$\b x_1$} \put(61,6){$\b x_2$}  \put(35,90){$\b x_4$}   \put(25,40){$T_4$}  \put(39,15){$T_1$} 
 \end{picture} } 
 \end{picture}
 \caption{A triangle $\b x_1\b x_2\b x_4$ at the origin $\b x_1(0,0)$ is split into two by the   horizontal
    line $\b x_1\b x_0$.} \label{1-e0}
 \end{figure}
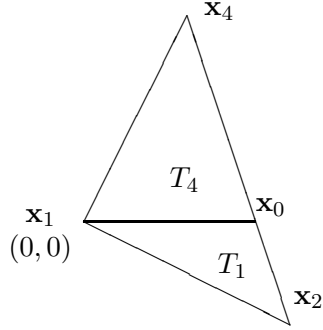
 
   By \eqref{1-k},  as the function $p_1$ and its first $m$ 
       normal derivatives vanish on the line $\b x_1\b x_2$, we have  
\an{\label{p14} \ad{ p_1 &= (y+a_1 x)^{m+1} \sum_{i+j\le k-m-1} c_{ij} x^i y^j 
      \ \t{ \ for some \ } a_1>0, c_{ij}, \\
               p_4&= (y-a_2 x)^{m+1} \sum_{i+j\le k-m-1} d_{ij} x^i y^j  \ \t{ \ for some \ } a_2>0, d_{ij} .  } }
By the $C^1$ condition in \eqref{1-k}, we get, by comparing coefficients, from \eqref{p14}, that
\a{ a_1^{m+1} c_{i0} &= (-a_2)^{m+1} d_{i0},  \quad  i=0,\dots,k-m-1, \\
     (m+1) a_1^m c_{00}   &=(m+1)(- a_2)^m c_{00} ,   \\
     (m+1) a_1^m c_{i+1,0} + a_1^{m+1} c_{i1} &=(m+1)(- a_2)^m d_{i+1,0} +   (-a_2)^{m+1} d_{i1} }
for $i=0,\dots, k-m-2$.  By the first two equations above, we get  
   \an{\label{3-0} \left\{ \ad{ a_1^{m+1} c_{00} + (-1)^{m} a_2^{m+1} d_{00} & =0, \\
                  a_1^m \ \ \; c_{00} - (-1)^{m} a_2^m \ \ \; d_{00} & = 0 , } \right. \quad  \Rightarrow \
                   \ \left\{ \ad{   c_{00} & =0, \\
                 d_{00} & = 0 , } \right. }
   as the determinant is $(-1)^{m+1}a_1^m a_2^m (a_1+a_2)$,  non-zero.  
This leads to an additional constraint that the $m+1$st tangential derivative 
\a{  &\quad \ \partial_{(\b x_1\b x_0)^{m+1}} p_1(\b x_0) =  \partial_{y^{m+1}} p_1(0,0) \\
         &= (m+1)! c_{00} + 0 \sum_{j=1}^{m+1} \p{ m+1 \\ j } 
         \partial_{x^j} \left(\sum_{i+j\le k-m-1} c_{ij} x^i y^j \right)\\
         &=0.  }
The lemma is proved.
\end{proof}
  
\begin{lemma}  
Let a quadrilateral $Q= \b x_1\b x_2\b x_3\b x_4$ be subdivided into 
  four triangles by its two diagonals, where $\b x_0$ is the intersection of $\b x_1\b x_3$
    and $\b x_2\b x_4$, cf. Figure \ref{1-e4}.
The $(2k^2-2k+4)$ degrees of freedom in \eqref{dof} and $8k$ constraints in \eqref{c} 
  uniquely define a $C^1$-$P_k$ function on $Q$. 
\end{lemma}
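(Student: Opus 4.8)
The plan is to prove uni-solvency by the standard dimension-counting argument: since \eqref{N-e} shows that the total number of conditions (the $2k^2-2k+4$ degrees of freedom plus the $8k$ continuity constraints) equals $4\dim P_k$, the number of free coefficients in the four $P_k$ pieces, it suffices to show that the only $C^1$-$P_k$ function on $Q$ for which all degrees of freedom in \eqref{dof} vanish is $p\equiv 0$. So I would assume $F_m(p)=0$ for all $m$ and all four triangles, together with the continuity relations \eqref{c}, and derive $p_i\equiv 0$ for $i=1,2,3,4$.

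The key steps, in order, would be as follows. First, on each edge $\b x_i\b x_{i'}$ of the quadrilateral, the vanishing of $p_i$ at $\b x_i$ (value and first derivatives), the vanishing of the interior Lagrange values along the edge, and the vanishing of all normal derivatives $\partial_{\b n^l} p_i$ at the prescribed Lagrange points force $p_i$ and all its normal derivatives up to order $k-2$ to vanish identically along the boundary edge $\b x_i\b x_{i'}$ — this is a one-dimensional Lagrange-interpolation argument on each edge, counting that $k-1-l$ points plus the endpoint data pin down a degree-$(k-l)$ polynomial restriction. Second, invoke the preceding Lemma: on the sub-triangle $T_i$ of $Q$ (playing the role of $\b x_1\b x_2\b x_0$), $p_i$ vanishes to high order on the boundary edge, and across the diagonal $\b x_i\b x_0$ the $C^1$-matching from \eqref{c} together with the analogous high-order vanishing of the neighbor $p_{i-1}$ on its boundary edge gives the factorization structure of \eqref{p14}; the Lemma then yields that the tangential derivative of order $m+1$ at $\b x_0$ vanishes, and iterating over $m$ forces enough interior data at $\b x_0$ to kill the polynomials. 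Third, I would use the conditions at $\b x_0$ in \eqref{c} — the value $p_1(\b x_0)=p_i(\b x_0)$ and the two cross-diagonal derivative matchings — to bootstrap: once $p_i$ and its normal derivatives vanish on one edge $\b x_i\b x_0$ of $T_i$, and the accumulated vanishing at $\b x_0$ from the Lemma is in hand, $p_i$ will have a full factor $(\text{linear form vanishing on } \b x_i\b x_0)^{?}$ and a full factor from the boundary edge, leaving a low-degree factor that the remaining interior degrees of freedom annihilate, so $p_i\equiv 0$.

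I expect the main obstacle to be the careful treatment of the diagonal lines $\b x_i\b x_0$ inside $Q$, in particular the role of the deliberately omitted constraint encoded in \eqref{omit}: the paper has warned that \eqref{c} does not a priori guarantee $p\in C^1(Q)$ across the diagonal $\b x_3\b x_0$, so part of the proof must show the missing normal-derivative continuity is forced automatically. This is exactly where the Lemma is doing nontrivial work — the vanishing of $c_{00}$ and $d_{00}$ there is what propagates $C^1$-matching from the $\b x_0$-vertex data outward. Getting the bookkeeping right (which edges carry full high-order vanishing, what residual polynomial degree survives on each $T_i$ after extracting the boundary-edge factor and the diagonal factor, and checking the count of leftover interior Lagrange/normal-derivative dofs matches that residual dimension) is the delicate part; the rest is routine edge-by-edge Lagrange interpolation and comparing coefficients as in the first Lemma.
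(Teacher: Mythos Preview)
Your high-level strategy (square system, so uniqueness suffices) and your identification of the omitted constraint in \eqref{omit} as the delicate point are both correct. But your proposed execution has a genuine gap in the very first step, and the role you assign to Lemma~3.1 is inverted relative to what actually works.

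Specifically, you claim that the edge DOFs alone force $p_i$ and \emph{all} normal derivatives up to order $k-2$ to vanish on the boundary edge $\b x_i\b x_{i'}$, by counting $k-1-l$ interior points ``plus the endpoint data''. For $l=0,1$ this is fine. For $l\ge 2$, however, there \emph{is no endpoint data}: only value and first derivatives are prescribed at the vertices, so the corner values of $\partial_{(\b x_i\b x_{i'}^\perp)^l} p_i$ are not DOFs. You are two conditions short for each $l\ge 2$, and that is precisely the job of Lemma~3.1. The paper's argument is an induction on $m$: from $p_i=\lambda_i^{m+1}q$ (factorization along the boundary edge) together with the same factorization for the neighbor across the diagonal $\b x_i\b x_0$, Lemma~3.1 yields the vanishing of an $(m+1)$st derivative at the \emph{corner} $\b x_i$ (not at the center $\b x_0$ as you wrote); this is then converted into $\partial_{(\b x_i\b x_{i'}^\perp)^{m+1}} p_i(\b x_i)=0$, and symmetrically at $\b x_{i'}$. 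Only now, with the two missing endpoint values in hand, do the $k-1-l$ interior DOFs for $\partial_{\b n^{m+1}}$ force the $(m+1)$st normal derivative to vanish on the whole edge, advancing to $p_i=\lambda_i^{m+2}q'$. Iterating gives $p_i=c_i\lambda_i^{k}$ with $c_i$ constant.

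Your third step also does not match the element: there are \emph{no} interior DOFs in \eqref{dof} --- every degree of freedom sits on a boundary edge --- so ``the remaining interior degrees of freedom'' cannot annihilate anything. Instead, once $p_1=c_1\lambda_1^{k}$ and $p_4=c_4\lambda_4^{k}$, the $C^0$ condition at $\b x_0$ gives $c_1=c_4$, and the $C^1$ condition across $\b x_1\b x_0$ forces $c_1(1+\|\b x_1\b x_0\|/\|\b x_4\b x_0\|)=0$, hence $p_1=p_4=0$. Only \emph{after} killing $p_1,p_4$ does the paper recover the missing normal-derivative match on $\b x_3\b x_0$: since $p=0$ on the $T_1$ side of $\b x_2\b x_0$ and on the $T_4$ side of $\b x_4\b x_0$, the $C^1$ continuity there supplies the extra derivative information at $\b x_0$ needed to show $\partial_{\b x_0\b x_3^\perp}p_2=\partial_{\b x_0\b x_3^\perp}p_3$ on $\b x_0\b x_3$, after which the same argument runs on $T_2,T_3$.
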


\begin{proof} By \eqref{N-e}, we have a square system of linear equations.
The uni-solvency is implied by uniqueness.  
For the $p$ satisfying the above $4\dim P_k$  homogeneous equations,  we show $p=0$.

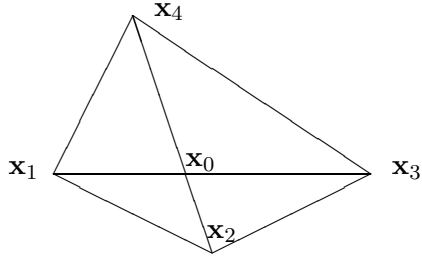
\begin{figure}[H]\centering
 \begin{picture}(150,100)(-20,0) 
 
 \put(0,0){\begin{picture}(100,108)(0,0) 

 \put(0,30){\line(1,0){120}}  \put(0,30){\line(1,2){30}}  \put(0,30){\line(2,-1){60}} 
 \put(60,0){\line(-1,3){30}}   \put(60,0){\line(2,1){60}}   \put(120,30){\line(-3,2){90}}  
   \put(50,33){$\b x_0$}  
   \put(-17, 30){$\b x_1$} \put(58,6){$\b x_2$}  \put(128,30){$\b x_3$}  \put(38,90){$\b x_4$} 
 \end{picture} } 
 \end{picture}
 \caption{A quadrilateral $Q=\b x_1\b x_2\b x_3\b x_4$ is subdivided into 
    four triangles by its two diagonals.} \label{1-e4}
 \end{figure}

On edge $\b x_1\b x_2$, the $P_k$ polynomial $p$ has its values
   and its tangential derivatives at two end-points all zero, and have $(k-3)$ mid-edge values zero.  We have $p|_{\b x_1\b x_2}=0$.
Again, on $\b x_1\b x_2$, the $P_{k-1}$ 
   polynomial, the normal derivative,  $\partial_{\b x_1\b x_2^\perp}p|_{\b x_1\b x_2}$ vanishes 
   due to the  two end-points and the mid-edge normal derivative conditions in \eqref{dof}.
Thus \an{\label{20}  p|_{\b x_1\b x_2} = \partial_{{\b x_1\b x_2^\perp}}p|_{\b x_1\b x_2}=0, }
where $\partial_{\b x_1\b x_2^\perp}$ denotes a directional derivative orthogonal to $\b x_1\b x_2$.
 By \eqref{20}, we have
\an{\label{l20} p|_{\b x_1\b x_2\b x_0} = \lambda_1^2 p_{k-2} 
   \quad \t{ for some \ } p_{k-2}\in P_{k-2}(\b x_1\b x_2\b x_0),
   }  where $\lambda_1$ is a linear function vanishing on $\b x_1\b x_2$ and assuming value 1 at $\b x_0$.

By \eqref{1-k2}, we have a second order vanishing tangential derivative at the corner, 
  cf. Figure \ref{1-e4},
\an{\label{p1d}  \partial_{\b x_1\b x_0^2} (p|_{\b x_1\b x_2\b x_0})(\b x_1) =0. }
As the normal derivative is identically 0 on the edge $\b x_1\b x_2$, 
         $\partial_{\b x_1\b x_2^\perp}p|_{\b x_1\b x_2}=0$,
          its tangent derivative vanishes on the edge,  $\partial_{(\b x_1\b x_2^\perp)
  (\b x_1\b x_2)}p|_{\b x_1\b x_2}=0$.
In particular,
\an{\label{p2d}  \partial_{(\b x_1\b x_2^\perp)(\b x_1\b x_2)} (p|_{\b x_1\b x_2\b x_0})(\b x_1) =0. }

Let the unit vector, for some unit vector $\langle v_1, v_2\rangle$ with $v_2\ne 0$, cf. Figure \ref{1-e4},
\a{ \frac{\b x_1\b x_0}{\|\b x_1\b x_0\|} = v_1  \frac{\b x_1\b x_2}{\|\b x_1\b x_2\|}  
     + v_2 \frac{\b x_1\b x_2^\perp}{\|\b x_1\b x_2^\perp\|} .  }
Thus, by \eqref{p1d} and \eqref{p2d}, we have
\a{ 0 & =  \partial_{\b x_1\b x_0^2} (p|_{\b x_1\b x_2\b x_0})(\b x_1) \\
        &= v_1^2  \partial_{\b x_1\b x_2^2} (p|_{\b x_1\b x_2\b x_0})(\b x_1) \\
         & \quad \ + 2v_1v_2
                  \partial_{(\b x_1\b x_2^\perp)(\b x_1\b x_2)} (p|_{\b x_1\b x_2\b x_0})(\b x_1) \\
         & \quad \         + v_2^2
                  \partial_{(\b x_1\b x_2^\perp)^2} (p|_{\b x_1\b x_2\b x_0})(\b x_1) \\
        &=  v_2^2  \partial_{(\b x_1\b x_2^\perp)^2} (p|_{\b x_1\b x_2\b x_0})(\b x_1),          }
which implies 
\an{ \label{l0} \partial_{(\b x_1\b x_2^\perp)^2} (p|_{\b x_1\b x_2\b x_0})(\b x_1)=0. }

At edge $\b x_2\b x_0$, the function $\partial_{\b x_2\b x_0^\perp} p$ is continuous by \eqref{c}.
Repeating above work for \eqref{l0} on edge $\b x_2\b x_0$, we get
\an{\label{r0} \partial_{(\b x_1\b x_2^\perp)^2} (p|_{\b x_1\b x_2\b x_0})(\b x_2)=0. }
By \eqref{l0}, \eqref{r0} and the degrees of freedom in the 4th line of \eqref{dof},
  as the $P_{k-2}$ polynomial $\partial_{(\b x_1\b x_2^\perp)^2}(p|_{\b x_1\b x_2\b x_0})$ has
    $k-1$ zero points on the line,  
  we have
  \a{ \partial_{(\b x_1\b x_2^\perp)^2} p|_{\b x_1\b x_2 }=0.  }
With it, we rewrite \eqref{l20} as 
\an{\label{l30}  p|_{\b x_1\b x_2\b x_0} = \lambda_1^3 p_{k-3} 
   \quad \t{ for some \ } p_{k-3}\in P_{k-3}(\b x_1\b x_2\b x_0).
   } 
                  
 If $k>3$, the above $p_{k-3}$ is not a constant yet.
We have $m=3$ now in \eqref{1-k} by \eqref{l30}.
We repeat all the work above on $ \partial_{(\b x_1\b x_2^\perp)^3} p|_{\b x_1\b x_2 }$ to obtain
  \a{ \ad{ & \partial_{(\b x_1\b x_2^\perp)^3} p|_{\b x_1\b x_2 }=0, \ \t{and} \\
         & p|_{\b x_1\b x_2\b x_0} = \lambda_1^4 p_{k-4} 
   \quad \t{ for some \ } p_{k-4}\in P_{k-4}(\b x_1\b x_2\b x_0).
   } }
It would be repeated until
\an{\label{l01}   p|_{\b x_1\b x_2\b x_0} = \lambda_1^{k} c_1 
   \quad \t{ for some \ } c_1\in P_0(\b x_1\b x_2\b x_0).  }

On triangle $\b x_4\b x_1\b x_0$, cf. Figure \ref{1-e4},  as $\partial_{\b x_4\b x_0^\perp} p$ is continuous 
  across edge $\b x_0\b x_4$,  
   we have 
\an{\label{l04}   p|_{\b x_4\b x_1\b x_0} = \lambda_4^{k} c_4 
   \quad \t{ for some \ } c_4\in P_0(\b x_4\b x_1\b x_0),  }
 where $\lambda_4$ is a linear function vanishing on $\b x_4\b x_1$ and assuming value 1
   at $\b x_0$.

By \eqref{l01} and \eqref{l04}, as $p$ is continuous, 
   we get
\an{\label{c-e} c_1=c_4.  } 
By \eqref{l01}, \eqref{l04} and \eqref{c-e}, as $\partial_{\b x_2\b x_4}p$ is continuous on two triangles $\b x_4\b x_1\b x_0$
   and $\b x_1\b x_2\b x_0$ (not known yet on the other two triangles), 
   we get
\an{\label{c-e1} \ad{ k \lambda_1^{k-1} c_1 \|\b x_1\b x_0\|^{-1}&=
           - k \lambda_1^{k-1} c_4 \|\b x_4\b x_0\|^{-1}, \\
                 c_1 &= - c_1 \frac{\|\b x_1\b x_0\|}{\|\b x_4\b x_0\|} = 0.  } }

Thus $p=0$ on the two triangles, $\b x_1\b x_2\b x_0$ and $\b x_4\b x_1\b x_0$.
We could not use this proof on the other two triangles due to a missing
   normal derivative constraint in \eqref{omit}.

By the $C^1$ continuity across $\b x_2\b x_0$, as $p=0$ on the other side of the edge,  we get
\a{ \partial_{\b x_2\b x_4^{\perp}} (\partial_{\b x_0\b x_3} p_2 )(\b x_0) = 0.  } 
By the $C^1$ continuity across $\b x_4\b x_0$, we get
\a{ \partial_{\b x_2\b x_4^{\perp}} (\partial_{\b x_0\b x_3} p_3 )(\b x_0) = 0.  }
Together, they imply that the two above $P_{k-1}$ polynomials,
  $\partial_{\b x_0\b x_3} p_2$ and $\partial_{\b x_0\b x_3} p_3$, have both tangent derivatives  
   vanishing at $\b x_0$, at the two sides of edge $\b x_0\b x_3$.
By \eqref{c}, in addition, these two polynomials are continuous at $k-1$ points on the edge
    $\b x_0\b x_3$.
Thus they are continuous at $\b x_0\b x_3$, i.e., $p$ is also $C^1$ across $\b x_3\b x_0$.
Repeating the work on the other two triangles, we get $p =0$.  The lemma is proved. 
 \end{proof}

\begin{theorem}  Let $ u\in H^{k+1}$ be the exact solution of the biharmoic equation \eqref{bi}.  
   Let $u_h$ be the $C^1$-$P_k$ finite element solution of \eqref{finite}.   
   Assuming the full-regularity on \eqref{bi}, it holds 
  \a{  \| u- u_h\|_{0} + h  |  u- u_h |_{1}+ h^2  |  u- u_h |_{2}  
         & \le Ch^{k+1} | u|_{k+1}, \quad k\ge3.  } 
\end{theorem}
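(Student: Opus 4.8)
\medskip\noindent\emph{Proof strategy.} The plan is the standard C\'ea-plus-duality argument for a conforming method. Since the constructions force $V_{h,0}^{(l)}\subset H^2_0(\Omega)$ for $l=1,2$, \eqref{finite} is a conforming Galerkin discretization of a bilinear form that is coercive on $H^2_0(\Omega)$ --- indeed $\|\Delta v\|_0=|v|_2$ on $H^2_0(\Omega)$, and $|v|_2\ge c\|v\|_2$ by Poincar\'e --- so Galerkin orthogonality $(\Delta(u-u_h),\Delta v)=0$ for all $v\in V_{h,0}^{(l)}$ holds and C\'ea's lemma gives $|u-u_h|_2\le C\inf_{v\in V_{h,0}^{(l)}}|u-v|_2$.

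Next I would build a nodal interpolation operator $I_h$ from the degrees of freedom \eqref{dof} (the condensed variant for $V_h^{(2)}$). In two dimensions $H^{k+1}(\Omega)\hookrightarrow C^{k-1}(\overline\Omega)$, and \eqref{dof} involves only derivatives up to order $k-2$, so $I_hu$ is well defined for $u\in H^{k+1}$. On each $Q\in\mathcal Q_h$ the uni-solvency lemma makes $I_hu|_Q$ the unique $C^1$-$P_k$ macro-function carrying the dofs of $u$; equality of the shared vertex and edge dofs makes $I_hu$ globally $C^1$, and the dofs forced to vanish along $\partial\Omega$ put $I_hu$ into $H^2_0(\Omega)$, hence $I_hu\in V_{h,0}^{(l)}$. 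A global $P_k$ polynomial restricted to $Q$ is itself such a macro-function, so $I_h$ reproduces $P_k$ on every $Q$ (for $V_h^{(2)}$ one also checks that its condensed local space still contains the global polynomials). A scaling to a reference configuration of the shape-regular quadrilateral, together with the Bramble--Hilbert lemma, then gives $|u-I_hu|_{m,Q}\le Ch_Q^{\,k+1-m}|u|_{k+1,Q}$ for $m=0,1,2$; summing over $Q$ and using quasi-uniformity gives $|u-I_hu|_2\le Ch^{k-1}|u|_{k+1}$, whence $h^2|u-u_h|_2\le Ch^{k+1}|u|_{k+1}$ by C\'ea.

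For the two lower-order norms I would use duality. Let $w\in H^4(\Omega)\cap H^2_0(\Omega)$ solve $\Delta^2 w=u-u_h$, so that the assumed full regularity gives $\|w\|_4\le C\|u-u_h\|_0$. Then Galerkin orthogonality and the interpolation estimate above yield
\[
\|u-u_h\|_0^2=(\Delta w,\Delta(u-u_h))=(\Delta(w-I_hw),\Delta(u-u_h))\le|w-I_hw|_2\,|u-u_h|_2\le Ch^{k+1}\|u-u_h\|_0\,|u|_{k+1},
\]
hence $\|u-u_h\|_0\le Ch^{k+1}|u|_{k+1}$. The middle term then follows from the interpolation inequality $|v|_1\le C\|v\|_0^{1/2}|v|_2^{1/2}$ on $H^2_0(\Omega)$ (or from a separate duality with an $H^3$-regular dual problem), since $h|u-u_h|_1\le Ch(h^{k+1}h^{k-1})^{1/2}|u|_{k+1}=Ch^{k+1}|u|_{k+1}$. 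Adding the three bounds gives the theorem.

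I expect the main obstacle to be the local interpolation estimate: $I_h$ is a genuine macro-element operator that couples the four sub-triangles through the $C^1$ conditions on the diagonals, and general quadrilaterals are not affine images of a single reference cell, so the scaling/Bramble--Hilbert argument must be organised over the shape-regular family of quadrilaterals (equivalently, over a compact family of reference sub-triangle configurations) with constants independent of the element. A secondary technical point is to confirm that the condensed local space of the second family still reproduces $P_k$, so that the same interpolation estimate transfers to $V_{h,0}^{(2)}$.
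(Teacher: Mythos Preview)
Your approach is exactly the standard C\'ea-plus-duality argument that the paper intends: the paper's own proof reads in full ``The proof is standard. One proof can be found in \cite{Zhang-bubble-p4}.'' Your outline is therefore considerably more detailed than what the paper provides, and it is correct in substance.

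One small technical caveat worth flagging in your write-up: in the duality step you apply the nodal interpolant $I_h$ to the dual solution $w\in H^4(\Omega)$, but for $k\ge 5$ the degrees of freedom in \eqref{dof} involve normal derivatives of order up to $k-2\ge 3$, which are not pointwise defined for $w\in H^4$ in two dimensions. The usual fix is either to replace $I_h$ by a Scott--Zhang/Cl\'ement type quasi-interpolant (which needs only $H^2$ regularity and still reproduces $P_3$, hence gives $|w-I_hw|_2\le Ch^2|w|_4$), or to invoke the best-approximation property directly rather than a specific interpolant. This is entirely routine and does not affect your overall strategy.
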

                        
\begin{proof} The proof is standard.  One proof can be found in \cite{Zhang-bubble-p4}.
\end{proof}

\section{Numerical Experiments}

In the numerical computation,  we solve the following interface problem on domain $\Omega=(0,1)\times(0,1)$. 
Find $u\in H^2_0(\Omega)$, such that 
\an{ \label{s1} \t{div}\b{div}( \mu D^2 u ) & = f, \t{ \ where }\ \mu(x,y)=\begin{cases} \mu_0, &\t{ if } \ x\le \frac 12,
    \\ 1, &\t{ if } \ x > \frac 12. \end{cases}
  }
When $\mu_0=1$, \eqref{s1} is the standard biharmonic equation \eqref{bi}.
We choose an $f$ in \eqref{s1} so that the exact solution is
\an{\label{s2}
   u =\begin{cases} - \mu_0 y^4 (y-1)^4 x^2  (4 x-3) (2 x-1 )^2, &\t{ if } \ x\le \frac 12,
    \\(x-1 )^2 y^4 (y-1)^4 (4 x-1) (2 x-1)^2, &\t{ if } \ x > \frac 12. \end{cases} } 
Here $u$ is not smooth at the vertical line $x=1/2$ if $\mu_0\ne 1$.  
That is, the second and high orders derivatives jump across the line.

     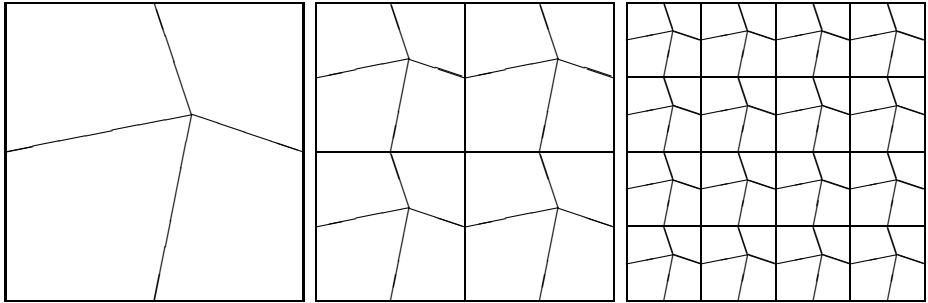
\begin{figure}[H] \setlength\unitlength{0.94pt}\begin{center}
    \begin{picture}(370,120)(0,0)
     \def\mc{\begin{picture}(120,120)(0,0)
       \put(0,0){\line(1,0){120}} \put(0,120){\line(1,0){120}}
      \put(0,0){\line(0,1){120}}  \put(120,0){\line(0,1){120}}  
      \put(75,75){\line(-1,-5){15}}  \put(75,75){\line(-5,-1){75}}
     \put(75,75){\line(-1,3){15}}  \put(75,75){\line(3,-1){45}}
      \end{picture}}

    \put(0,0){\mc}
    
      \put(125,0){\setlength\unitlength{0.47pt}\begin{picture}(90,90)(0,0)
    \put(0,0){\mc}\put(120,0){\mc}\put(0,120){\mc}\put(120,120){\mc}\end{picture}}
      \put(250,0){\setlength\unitlength{0.235pt}\begin{picture}(120,120)(0,0)
    \multiput(0,0)(120,0){4}{\multiput(0,0)(0,120){4}{\mc}} \end{picture}}
    \end{picture}
 \caption{The first three grids for the computation in Tables \ref{t-1}, \ref{t-3}, \ref{t-4}, \ref{t-6} and 
         \ref{t-7}. }\label{q-grid} 
    \end{center} \end{figure}

We compute the solution \eqref{s2} on the quadrilateral grids shown in Figure \ref{q-grid}, by 
  the newly constructed $C^1$-$P_k$ Fraeijs de Veubeke-Sander finite elements.
The results, by the  $C^1$-$P_3$ finite element, 
   are listed in Table \ref{t-1}, where we use the notation of weighted norm,
  \a{ | u |_a ^2 =(\mu D^2 u, D^2 u). }
We can see that the optimal orders of convergence 
  are achieved in all cases, independent of the coefficient jump.

\begin{table}[H]
  \centering  \renewcommand{\arraystretch}{1.1}
  \caption{Error profile by the $C^1$-$P_3$ element on the  
   quadrilateral meshes shown as in Figure \ref{q-grid}, for computing \eqref{s2}. }
  \label{t-1}
\begin{tabular}{c|cc|cc|cc}
\hline
grid & \multicolumn{2}{c|}{ $\| u-u_h\|_{0}$  \; $O(h^r)$} 
   & \multicolumn{2}{c|}{  $| u-u_h|_{1}$ \;$O(h^r)$} 
  &  \multicolumn{2}{c}{  $|u-u_h|_{a}$ \;$O(h^r)$} \\ \hline
    &  \multicolumn{6}{c}{ By the quadrilateral $C^1$-$P_3$ \eqref{V1} element, $\mu_0=1$. }   \\
\hline   
 3&     0.466E-6 &  3.7&     0.487E-5 &  3.1&     0.219E-3 &  1.8 \\
 4&     0.312E-7 &  3.9&     0.583E-6 &  3.1&     0.563E-4 &  2.0 \\
 5&     0.197E-8 &  4.0&     0.754E-7 &  2.9&     0.147E-4 &  1.9 \\
 6&     0.123E-9 &  4.0&     0.976E-8 &  2.9&     0.380E-5 &  2.0 \\
\hline 
 &  \multicolumn{6}{c}{ By the quadrilateral $C^1$-$P_3$ \eqref{V1} element, $\mu_0=10$. }   \\
\hline  
 3&     0.466E-6 &  3.7&     0.487E-5 &  3.1&     0.219E-3 &  1.8 \\
 4&     0.312E-7 &  3.9&     0.583E-6 &  3.1&     0.563E-4 &  2.0 \\
 5&     0.197E-8 &  4.0&     0.754E-7 &  2.9&     0.147E-4 &  1.9 \\
 6&     0.123E-9 &  4.0&     0.976E-8 &  2.9&     0.380E-5 &  2.0 \\
\hline   
    \end{tabular}%
\end{table}%

As a comparison, we compute the solution \eqref{s2} on uniform triangular grids shown 
   in Figure \ref{t-grid}, by the $C^1$-$P_4$ Bell finite element.
The results,  are listed in Table \ref{t-2}.
We can see that the optimal orders of convergence 
  are achieved in all norms when $\mu_0$.
But for the interface problem, the order of convergence is reduced to a half, as the solution is
  in $H^{2+1/2}(\Omega)$.  
Comparing to Table \ref{t-1}, we can see an advantage of the Fraeijs de Veubeke-Sander finite elements.

     \begin{figure}[H] \setlength\unitlength{1.1pt}\begin{center}
    \begin{picture}(300,90)(0,0)
     \def\mc{\begin{picture}(90,90)(0,0)
       \put(0,0){\line(1,0){90}} \put(0,90){\line(1,0){90}}
      \put(0,0){\line(0,1){90}}  \put(90,0){\line(0,1){90}} 
     \put(0,90){\line(1,-1){90}}
      \end{picture}}

    \put(0,0){\mc}
      \put(105,0){\setlength\unitlength{0.55pt}\begin{picture}(90,90)(0,0)
    \put(0,0){\mc}\put(90,0){\mc}\put(0,90){\mc}\put(90,90){\mc}\end{picture}}
      \put(210,0){\setlength\unitlength{0.275pt}\begin{picture}(90,90)(0,0)
    \multiput(0,0)(90,0){4}{\multiput(0,0)(0,90){4}{\mc}} \end{picture}}
    \end{picture}
 \caption{The first three levels of grids for the computation in Tables \ref{t-2}, \ref{t-5} and \ref{t-8}. }\label{t-grid} 
    \end{center} \end{figure}
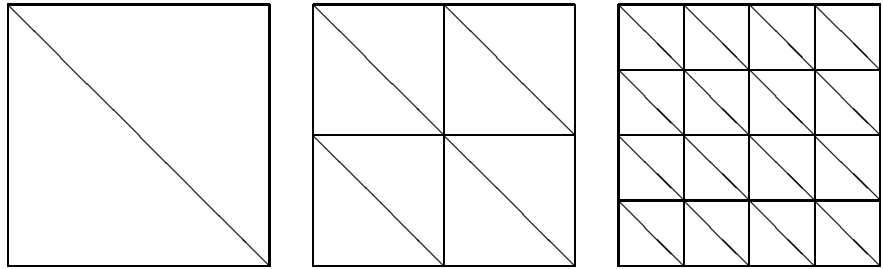

\begin{table}[H]
  \centering  \renewcommand{\arraystretch}{1.1}
  \caption{Error profile by the $C^1$-$P_4$ Bell element on the  
   triangular meshes shown as in Figure \ref{t-grid}, for computing \eqref{s2}. }
  \label{t-2}
\begin{tabular}{c|cc|cc|cc}
\hline
grid & \multicolumn{2}{c|}{ $\| u-u_h\|_{0}$  \; $O(h^r)$} 
   & \multicolumn{2}{c|}{  $| u-u_h|_{1}$ \;$O(h^r)$} 
  &  \multicolumn{2}{c}{  $|u-u_h|_{a}$ \;$O(h^r)$} \\ \hline
    &  \multicolumn{6}{c}{ By the triangular $C^1$-$P_4$ Bell element, $\mu_0=1$. }   \\
\hline  
 2&   0.2825E-03 &  --- &   0.1413E-02 &  --- &   0.1147E-01 &  ---  \\
 3&   0.1029E-05 &  8.10&   0.2060E-04 &  6.10&   0.7314E-03 &  3.97 \\
 4&   0.2497E-07 &  5.37&   0.1139E-05 &  4.18&   0.9827E-04 &  2.90 \\
 5&   0.6027E-09 &  5.37&   0.6576E-07 &  4.11&   0.1251E-04 &  2.97 \\
\hline 
 &  \multicolumn{6}{c}{ By the triangular $C^1$-$P_4$ Bell element, $\mu_0=10$. }   \\
\hline 
 2&   0.1248E-02 &  --- &   0.7027E-02 &  --- &   0.8385E-01 &  ---  \\
 3&   0.1863E-04 &  6.07&   0.3254E-03 &  4.43&   0.2558E-01 &  1.71 \\
 4&   0.8777E-05 &  1.09&   0.1141E-03 &  1.51&   0.1632E-01 &  0.65 \\
 5&   0.4494E-05 &  0.97&   0.4792E-04 &  1.25&   0.1081E-01 &  0.59 \\
\hline   
    \end{tabular}%
\end{table}%

We compute the solution \eqref{s2} on the quadrilateral grids shown in Figure \ref{q-grid}, by 
  the newly constructed $C^1$-$P_4$ Fraeijs de Veubeke-Sander finite element \eqref{V1} 
  in Table \ref{t-3}.
We can see that the optimal orders of convergence 
  are achieved in all cases, independent of the coefficient jump.

\begin{table}[H]
  \centering  \renewcommand{\arraystretch}{1.1}
  \caption{Error profile by the $C^1$-$P_4$ element on the  
   quadrilateral meshes shown as in Figure \ref{q-grid}, for computing \eqref{s2}. }
  \label{t-3}
\begin{tabular}{c|cc|cc|cc}
\hline
grid & \multicolumn{2}{c|}{ $\| u-u_h\|_{0}$  \; $O(h^r)$} 
   & \multicolumn{2}{c|}{  $| u-u_h|_{1}$ \;$O(h^r)$} 
  &  \multicolumn{2}{c}{  $|u-u_h|_{a}$ \;$O(h^r)$} \\ \hline
    &  \multicolumn{6}{c}{ By the quadrilateral $C^1$-$P_4$ \eqref{V1} element, $\mu_0=1$. }   \\
\hline   
 2&     0.429E-5 &  ---&     0.850E-4 &  ---&     0.372E-2 &  --- \\
 3&     0.134E-6 &  5.0&     0.638E-5 &  3.7&     0.548E-3 &  2.8 \\
 4&     0.432E-8 &  5.0&     0.433E-6 &  3.9&     0.736E-4 &  2.9 \\
 5&     0.136E-9 &  5.0&     0.277E-7 &  4.0&     0.941E-5 &  3.0 \\
\hline 
 &  \multicolumn{6}{c}{ By the quadrilateral $C^1$-$P_4$ \eqref{V1} element, $\mu_0=10$. }   \\
\hline  
 2&     0.429E-5 &  ---&     0.850E-4 &  ---&     0.372E-2 &  --- \\
 3&     0.134E-6 &  5.0&     0.638E-5 &  3.7&     0.548E-3 &  2.8 \\
 4&     0.432E-8 &  5.0&     0.433E-6 &  3.9&     0.736E-4 &  2.9 \\
 5&     0.136E-9 &  5.0&     0.277E-7 &  4.0&     0.941E-5 &  3.0 \\
\hline   
    \end{tabular}%
\end{table}%

We compute the solution \eqref{s2} on the quadrilateral grids shown in Figure \ref{q-grid}, by 
  the newly constructed condensed $C^1$-$P_4$ Fraeijs de Veubeke-Sander finite element \eqref{V2} 
  in Table \ref{t-4}.
We can see, when $\mu_0=1$, the condensed method has much less errors than its counterpart,
  the $C^1$-$P_4$ Fraeijs de Veubeke-Sander finite elements \eqref{V1}, comparing the first part
   of Table \ref{t-3} and \ref{t-4}. 
Here we reach the computer accuracy on grid 4, one level less than we did by the 
   standard $C^1$-$P_4$ Fraeijs de Veubeke-Sander finite element \eqref{V1}.
But when the solution is piecewise smooth, in the
  second part of Table \ref{t-4}, the condensed $C^1$-$P_4$ method is useless.

\begin{table}[H]
  \centering  \renewcommand{\arraystretch}{1.1}
  \caption{Error profile by the $C^1$-$P_4$ element on the  
   quadrilateral meshes shown as in Figure \ref{q-grid}, for computing \eqref{s2}. }
  \label{t-4}
\begin{tabular}{c|cc|cc|cc}
\hline
grid & \multicolumn{2}{c|}{ $\| u-u_h\|_{0}$  \; $O(h^r)$} 
   & \multicolumn{2}{c|}{  $| u-u_h|_{1}$ \;$O(h^r)$} 
  &  \multicolumn{2}{c}{  $|u-u_h|_{a}$ \;$O(h^r)$} \\ \hline
    &  \multicolumn{6}{c}{ By the condensed $C^1$-$P_4$ \eqref{V2} element, $\mu_0=1$. }   \\
\hline   
 2&    0.460E-06 &  --- &    0.889E-05 &  --- &    0.392E-03 & ---  \\
 3&    0.139E-07 &  5.0&    0.709E-06 &  3.6&    0.634E-04 &  2.6 \\
 4&    0.500E-09 &  4.8&    0.512E-07 &  3.8&    0.886E-05 &  2.8 \\
\hline 
 &  \multicolumn{6}{c}{ By the condensed $C^1$-$P_4$ \eqref{V2} element, $\mu_0=10$. }   \\
\hline  
 2&    0.157E-04 &  --- &    0.223E-03 &  --- &    0.143E-01 &  ---  \\
 3&    0.635E-05 &  1.3&    0.824E-04 &  1.4&    0.122E-01 &  0.2 \\
 4&    0.313E-05 &  1.0&    0.350E-04 &  1.2&    0.970E-02 &  0.3 \\
 5&    0.156E-05 &  1.0&    0.156E-04 &  1.2&    0.714E-02 &  0.4 \\
\hline   
    \end{tabular}%
\end{table}%

In Table \ref{t-5}, we compute the solution \eqref{s2} on uniform triangular grids shown 
   in Figure \ref{t-grid}, by the $C^1$-$P_5$ Argyris finite element. 
We can see that the optimal orders of convergence 
  are achieved in all norms when $\mu_0=1$.
Again, for the interface problem, the order of convergence is reduced to a half.

\begin{table}[H]
  \centering  \renewcommand{\arraystretch}{1.1}
  \caption{Error profile by the $C^1$-$P_5$ Argyris element on the  
   triangular meshes shown as in Figure \ref{t-grid}, for computing \eqref{s2}. }
  \label{t-5}
\begin{tabular}{c|cc|cc|cc}
\hline
grid & \multicolumn{2}{c|}{ $\| u-u_h\|_{0}$  \; $O(h^r)$} 
   & \multicolumn{2}{c|}{  $| u-u_h|_{1}$ \;$O(h^r)$} 
  &  \multicolumn{2}{c}{  $|u-u_h|_{a}$ \;$O(h^r)$} \\ \hline
    &  \multicolumn{6}{c}{ By the triangular $C^1$-$P_5$ Argyris element, $\mu_0=1$. }   \\
\hline  
 2&    0.273E-03 &  --- &    0.130E-02 &  --- &    0.106E-01 &  ---  \\
 3&    0.793E-06 &  8.43&    0.156E-04 &  6.39&    0.499E-03 &  4.42  \\
 4&    0.157E-07 &  5.66&    0.667E-06 &  4.54&    0.407E-04 &  3.61  \\
 5&    0.283E-09 &  5.79&    0.240E-07 &  4.80&    0.292E-05 &  3.80  \\
\hline 
 &  \multicolumn{6}{c}{ By the triangular $C^1$-$P_5$ Argyris element, $\mu_0=10$. }   \\
\hline 
 2&    0.119E-02 &  0.00&    0.644E-02 &  0.00&    0.738E-01 &  0.00  \\
 3&    0.125E-04 &  6.58&    0.249E-03 &  4.69&    0.193E-01 &  1.94  \\
 4&    0.512E-05 &  1.28&    0.831E-04 &  1.58&    0.114E-01 &  0.76  \\
 5&    0.265E-05 &  0.95&    0.328E-04 &  1.34&    0.720E-02 &  0.66  \\
\hline   
    \end{tabular}%
\end{table}%

In Table \ref{t-6}, we compute the solution \eqref{s2} 
   on the quadrilateral grids shown in Figure \ref{q-grid}, by 
  the newly constructed $C^1$-$P_5$ Fraeijs de Veubeke-Sander finite element \eqref{V1}.
We can see that the optimal orders of convergence 
  are achieved in all cases, independent of the coefficient jump.

\begin{table}[H]
  \centering  \renewcommand{\arraystretch}{1.1}
  \caption{Error profile by the $C^1$-$P_5$ element on the  
   quadrilateral meshes shown as in Figure \ref{q-grid}, for computing \eqref{s2}. }
  \label{t-6}
\begin{tabular}{c|cc|cc|cc}
\hline
grid & \multicolumn{2}{c|}{ $\| u-u_h\|_{0}$  \; $O(h^r)$} 
   & \multicolumn{2}{c|}{  $| u-u_h|_{1}$ \;$O(h^r)$} 
  &  \multicolumn{2}{c}{  $|u-u_h|_{a}$ \;$O(h^r)$} \\ \hline
    &  \multicolumn{6}{c}{ By the quadrilateral $C^1$-$P_5$ \eqref{V1} element, $\mu_0=1$. }   \\
\hline   
 2&     0.312E-6 &  ---&     0.107E-4 &  ---&     0.595E-3 &  --- \\
 3&     0.611E-8 &  5.7&     0.414E-6 &  4.7&     0.460E-4 &  3.7 \\
 4&     0.100E-9 &  5.9&     0.136E-7 &  4.9&     0.303E-5 &  3.9 \\
\hline 
 &  \multicolumn{6}{c}{ By the quadrilateral $C^1$-$P_5$ \eqref{V1} element, $\mu_0=10$. }   \\
\hline  
 2&     0.312E-6 &  ---&     0.107E-4 &  ---&     0.595E-3 &  --- \\
 3&     0.611E-8 &  5.7&     0.414E-6 &  4.7&     0.460E-4 &  3.7 \\
 4&     0.100E-9 &  5.9&     0.136E-7 &  4.9&     0.303E-5 &  3.9 \\
\hline   
    \end{tabular}%
\end{table}%

In Table \ref{t-7}, we  compute the solution \eqref{s2} on the quadrilateral grids shown in Figure \ref{q-grid}, by 
  the condensed $C^1$-$P_5$ Fraeijs de Veubeke-Sander finite element \eqref{V2}.
Again, when $\mu_0=1$, the condensed method is much more accurate and reaches 
  the computer accuracy on two grids.
But when the solution is piecewise smooth, in the
  second part of Table \ref{t-7}, the condensed $C^1$-$P_5$ method is nearly useless.

\begin{table}[H]
  \centering  \renewcommand{\arraystretch}{1.1}
  \caption{Error profile by the $C^1$-$P_5$ element on the  
   quadrilateral meshes shown as in Figure \ref{q-grid}, for computing \eqref{s2}. }
  \label{t-7}
\begin{tabular}{c|cc|cc|cc}
\hline
grid & \multicolumn{2}{c|}{ $\| u-u_h\|_{0}$  \; $O(h^r)$} 
   & \multicolumn{2}{c|}{  $| u-u_h|_{1}$ \;$O(h^r)$} 
  &  \multicolumn{2}{c}{  $|u-u_h|_{a}$ \;$O(h^r)$} \\ \hline
    &  \multicolumn{6}{c}{ By the condensed $C^1$-$P_5$ \eqref{V2} element, $\mu_0=1$. }   \\
\hline   
 2&    0.648E-07 &  ---&    0.166E-05 &  ---&    0.101E-03 &  --- \\
 3&    0.997E-09 &  6.0&    0.625E-07 &  4.7&    0.665E-05 &  3.9 \\
\hline 
 &  \multicolumn{6}{c}{ By the condensed $C^1$-$P_5$ \eqref{V2} element, $\mu_0=10$. }   \\
\hline  
 2&    0.168E-04 &  ---&    0.320E-03 &  ---&    0.295E-01 &  --- \\
 3&    0.598E-05 &  1.5&    0.121E-03 &  1.4&    0.263E-01 &  0.2 \\
 4&    0.280E-05 &  1.1&    0.467E-04 &  1.4&    0.207E-01 &  0.3 \\
 5&    0.138E-05 &  1.0&    0.186E-04 &  1.3&    0.152E-01 &  0.4 \\\hline   
    \end{tabular}%
\end{table}%

Finally, in Table \ref{t-8}, we compute the solution \eqref{s2} on uniform triangular grids shown 
   in Figure \ref{t-grid}, by the $C^1$-$P_6$ Argyris finite element. 
We can see that the optimal orders of convergence 
  are achieved in all norms when $\mu_0=1$.
Again, for the interface problem, the order of convergence is reduced to a half.

\begin{table}[H]
  \centering  \renewcommand{\arraystretch}{1.1}
  \caption{Error profile by the $C^1$-$P_6$ Argyris element on the  
   triangular meshes shown as in Figure \ref{t-grid}, for computing \eqref{s2}. }
  \label{t-8}
\begin{tabular}{c|cc|cc|cc}
\hline
grid & \multicolumn{2}{c|}{ $\| u-u_h\|_{0}$  \; $O(h^r)$} 
   & \multicolumn{2}{c|}{  $| u-u_h|_{1}$ \;$O(h^r)$} 
  &  \multicolumn{2}{c}{  $|u-u_h|_{a}$ \;$O(h^r)$} \\ \hline
    &  \multicolumn{6}{c}{ By the triangular $C^1$-$P_6$ Argyris element, $\mu_0=1$. }   \\
\hline  
 2&    0.432E-04 &  --- &    0.218E-03 &  --- &    0.172E-02 &  ---   \\
 3&    0.772E-07 &  9.13&    0.179E-05 &  6.93&    0.771E-04 &  4.48  \\
 4&    0.623E-09 &  6.95&    0.365E-07 &  5.61&    0.317E-05 &  4.60  \\
 5&    0.558E-11 &  6.80&    0.641E-09 &  5.83&    0.108E-06 &  4.88  \\
\hline 
 &  \multicolumn{6}{c}{ By the triangular $C^1$-$P_6$ Argyris element, $\mu_0=10$. }   \\
\hline 
 2&    0.176E-03 &  --- &    0.105E-02 &  --- &    0.274E-01 &  ---   \\
 3&    0.448E-05 &  5.30&    0.110E-03 &  3.26&    0.137E-01 &  1.00  \\
 4&    0.208E-05 &  1.11&    0.396E-04 &  1.47&    0.859E-02 &  0.67  \\
 5&    0.102E-05 &  1.03&    0.152E-04 &  1.39&    0.566E-02 &  0.60  \\
\hline   
    \end{tabular}%
\end{table}%

\end{document}